\newtheorem{theorem}{Theorem}[section]
\newtheorem{proposition}[theorem]{Proposition}
\newtheorem{lemma}[theorem]{Lemma}
\newtheorem{corollary}[theorem]{Corollary}
\newtheorem{remark}[theorem]{Remark}
\begin{document}
\title{Second-to-Top Term of $\widehat{HFK}$ of Closed 3-Braids}
\author{Zhaojun Chen$^1$\thanks{Email address: zchen5@caltech.edu.}
\\ $^1$Department of Mathematics,
\\ California Institute of Technology, Pasadena, CA 91125, United States
}
\date{Oct 15, 2025}
\maketitle
%removing the first page number
%\thispagestyle{empty}

%abstract part
~\\
\begin{abstract}
In this paper, we use the skein exact sequence and other techniques to compute the second-to-top term of $\widehat{HFK}$ of closed 3-braids. We do it case-by-case according to Xu's classification. We also verify the rank inequality conjectured by Sivek.
%key words
$\newline$\noindent\textbf{Key words}: Closed 3-Braids, Knot Floer Homology, Skein Relationship, Quasi-Alternating Link.
\end{abstract}
~\\
\section{Introduction and Terminologies}
Knot Floer homology is an invariant for null-homologous knots $K$ in an oriented three-manifold $Y.$ The invariant was discovered independently by Ozsv\'ath-Szab\'o [10] and Rasmussen [13]. There are several versions of knot Floer homology. In this paper we focus on the simplest version, which is denoted as $\widehat{HFK}$ and takes the form of a bigraded, finite dimensional vector space over $\mathbb{F}=\mathbb{F}_2.$

We can naturally extend the definition of $\widehat{HFK}$ to oriented, null-homologous links in $Y,$ since oriented, null-homologous links in $Y$ correspond to certain oriented, null-homologous knots in $Y\#^{c-1}(S^2\times S^1)$ ($c$ is the number of components of the original link). For this paper, we concern with the special case when the ambient manifold $Y$ is $S^3.$ 

Let $L$ be a link (possibly a knot), there is a finite integer $u(L)$ such that $u(L)=\max\{s\in \mathbb{Z}:\widehat{HFK}(L,s)\not=0\}.$ For positive braid links, both the top term and the second-to-top term of their $\widehat{HFK}$ are known. Stallings [15] showed that non-split positive braid links are fibered, and Ozsv\'ath-Szab\'o [11] showed that a fibered link $L$ has $\widehat{HFK}(L,u(L))\cong \mathbb{F}$. The second-to-top term is computed in a paper by Zhechi Cheng [3]. Cheng's result is that for a positive braid link $L,$ $$\widehat{HFK}(L,u(L)-1)\cong \mathbb{F}^{p(L)+|L|-s(L)}[-1]\otimes(\mathbb{F}[0]\oplus \mathbb{F}[-1])^{\otimes s(L)-1}.$$ Here, $|L|$ is number of components of $L,$ $s(L)$ is the number of split factors of $L,$ and $p(L)$ is the number of prime factors of $L.$ Specifically, $p(L)$ is defined by $p(L_1\sqcup \cdots \sqcup L_{s(L)})=p(L_1)+\cdots+p(L_{s(L)})$ and $p(L_i)$ is the largest possible number of components splitting $L_i$ into connected sums and $p(unknot)=0.$
    
In a 2009 paper, Ni computed the top term of $\widehat{HFK}$ of closed 3-braids, which are not necessarily positive [8]. In this paper, we compute the second-to-top term of $\widehat{HFK}$ of closed 3-braids, using the exact triangle and other techniques. Note that we use the same convention for Maslov gradings as in Cheng's paper, which always takes integral values. Our convention differs from the convention in [10], which can take half-integral values, by a $+\frac{|L|-1}{2}$ shift.

Let $\sigma_1,\sigma_2$ be the standard Artin generators of the group of 3-braids $B_3.$ Let $a_1=\sigma_1,$ $a_2=\sigma_2,$ $a_3=\sigma_2\sigma_1\sigma_2^{-1}$. $B_3$ can be presented by $<a_1,a_2,a_3:a_2a_1=a_3a_2=a_1a_3>.$ 
\begin{figure}[H]
 \centering
  \includegraphics[width=0.75\textwidth]{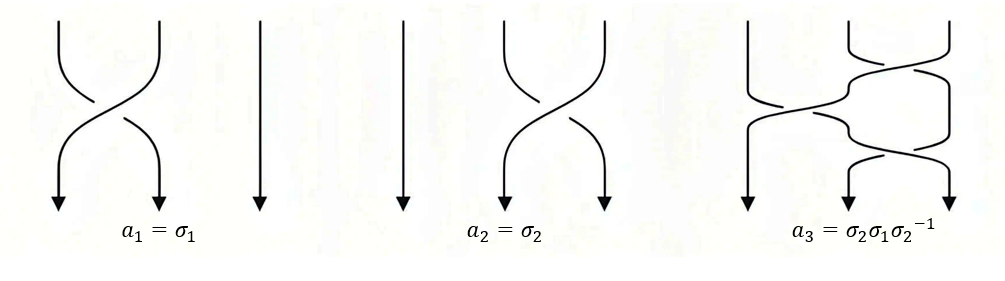}
 \caption{Generators of $B_3$}
 \end{figure}

For $w\in B_3,$ denote the closure of $w$ by $cl(w),$ and let $UT(w)$ be the reduced word of $w$ obtained as follows: if the subword $a_ia_i$ is contained in $w$, replace it with a single $a_i;$ repeat until $w$ does not contain subword of the form $a_ia_i.$

We do the computation case-by-case according to a full classification of closed 3-braids by Xu [16] (see Section 2 for details). The result is summarized in the following theorem:
\begin{theorem} For $w\in B_3,$ let $L$ be $cl(w).$ Let $\zeta(w)$ be the absolute value of the coefficient of the second-to-top term of $(t^{\frac{-1}{2}}-t^{\frac{1}{2}})^{|L|-1}\Delta_{L}(t)$. $\widehat{HFK}(L,u(L)-1)$ is as follows:
\begin{table}[H]
\centering
\caption{Classification of $\widehat{HFK}(L,u(L)-1)$ for closed 3-braids}
\setlength{\tabcolsep}{6pt}
\renewcommand{\arraystretch}{1.30}
\begin{tabular}{|l|p{0.40\linewidth}|p{0.46\linewidth}|}
\hline
\textbf{Type} & \textbf{Subtype (representative word $w$)} & $\widehat{HFK}(L,u(L)-1)$ \\
\hline
\multirow{4}{*}{$\alpha^d P$} 
& $d>1$ or $l(P)=0$. & $\mathbb{F}^{\zeta(w)}[-1]$. \\ \cline{2-3}
& $d=1$, $l(UT(P))\equiv 1$ mod 3. & $\mathbb{F}^{\zeta(w)}[-1]$. \\ \cline{2-3}
& $d=1$, $l(UT(P))\equiv 2$ mod 3, $k=\lfloor\frac{l(UT(P))}{3}\rfloor$. 
& $\mathbb{F}[k-1]\ \oplus\
\begin{cases}
\mathbb{F}^{\zeta(w)+1}[-1], & k\ \text{odd},\\
\mathbb{F}^{\zeta(w)-1}[-1], & k\ \text{even}.
\end{cases}$ \\ \cline{2-3}
& $d=1$, $l(UT(P))\equiv 0$ mod 3, $k=\frac{l(UT(P))}{3}>0$. 
& $\mathbb{F}[k-2]\ \oplus\
\begin{cases}
\mathbb{F}^{\zeta(w)+1}[-1], & k\ \text{even},\\
\mathbb{F}^{\zeta(w)-1}[-1], & k\ \text{odd}.
\end{cases}$\\
\hline
\multirow{3}{*}{$NP$} 
& General: $l(N),l(P)>1$, with top term $\widehat{HFK}(L,u(L))\cong \mathbb{F}[p]$ \newline($p$ determined by Proposition 3.5). 
& $\mathbb{F}^{\zeta(w)}[p-1]$. \\ \cline{2-3}
& $l(N)=1,$ $k=\lfloor\frac{l(UT(P))}{3}\rfloor>1$.
& $\mathbb{F}[0]\ \oplus\
\begin{cases}
\mathbb{F}^{\zeta(w)+1}[k-1], & k\ \text{even},\\
\mathbb{F}^{\zeta(w)-1}[k-1], & k\ \text{odd}.
\end{cases}$ \\ \cline{2-3}
& $l(N)=1$, $k=\lfloor\frac{l(UT(P))}{3}\rfloor\leq 1$.& $\mathbb{F}^{\zeta(w)}[k-1].$ \\ \cline{2-3}
\hline
\multirow{5}{*}{$P$} 
& $w\sim a_1^{n_1} a_2^{m_1} a_3^{\ell_1}$. 
& $\mathbb{F}^{\zeta(w)}[-1]$. \\ \cline{2-3}
& $w\sim a_1^{n_1} a_2^{m_1} a_3^{\ell_1} a_1^{n_2} a_2^{m_2} a_3^{\ell_2}$, each $n_i,m_i,l_i\leq 2$. & $\mathbb{F}^{|L|}[-1]\ \oplus\ \mathbb{F}^{|L|+\zeta(w)}[0]$. \\ \cline{2-3}
& $w\sim a_1^{n_1} a_2^{m_1} a_3^{\ell_1} a_1^{n_2} a_2^{m_2} a_3^{\ell_2}$ with $n_1>2$. 
& $\mathbb{F}^{\zeta(w_+)+(|L|-|L_+|)}[-1]\ \oplus\ \mathbb{F}^{\zeta(w_-)+(|L|-|L_+|)}[0]$.
\newline($w^+=\alpha^2a_3^{n_1-3}a_1^{-n_1}w$, $w^-=a_2^{-1}a_1^{2-n_1}wa_1^{n_1-2}$, $L_+$ is $cl(w_+)$)\\ \cline{2-3}
& $w\sim a_1^{n_1}\cdots a_3^{\ell_k}$ with $k>2$. 
& If $k$ even: $\mathbb{F}^{1+\zeta(w_+)+(|L|-|L_+|)}[-1]\ \oplus\ \mathbb{F}^{1+\zeta(w_-)+(|L|-|L_+|)}[k-2]$.\\[-2pt]
&& If $k$ odd: $\mathbb{F}^{\zeta(w_+)+(|L|-|L_+|)-1}[-1]\ \oplus\ \mathbb{F}^{\zeta(w_-)+(|L|-|L_+|)-1}[k-2]$. 
\newline($w^+=a_2w$, $w_-=a_2^{-1}w$, $L_+$ is $cl(w^+)$)\\ \cline{2-3}
& $w\sim a_1^{n}$ ($n\geq 0$). 
&If $n\leq 1:$ trivial.\\[-2pt]
&& If $n>1,$ $n$ odd: $\mathbb{F}[-1]\oplus \mathbb{F}[-2]$.
\newline If $n>1,$ $n$ even: $\mathbb{F}^2[-1]\oplus \mathbb{F}^2[-2]$. \\ 
\hline
Other 
&$w^{-1}\sim\alpha^{d}P$, $a_1^{n}$ or $a_2^{-1}P$. 
& Use symmetry: 
\newline$\widehat{HFK}_{m}(L,u(L)-1)\cong \widehat{HFK}_{-m+|L|-1}(\tilde{L},1-u(\tilde{L}))\cong \widehat{HFK}_{-m+|L|-3+2u(\tilde{L})}(\tilde{L},u(\tilde{L})-1)$. ( $\tilde{L}$ is $cl(w^{-1})$). \\
\hline
\end{tabular}
\label{tab:3braids-second-top}
\end{table}
\end{theorem}
\begin{remark}We omit certain cases in $P$ category because they are conjugate to other cases covered by the table. The case $l(UT(P))\equiv 1$ mod 3 is conjugate to the case $l(UT(P))\equiv 0$ mod 3, and the case $l(UT(P))\equiv 2$ mod 3 is conjugate to the case $\alpha^d P.$ The proof of Proposition 4.11 will show that we have indeed covered all cases in the $P$ category.\end{remark}
    For a few boundary cases, we check the $\widehat{HFK}$ on \textit{KnotInfo} [5].

    In fact, our results verify the following graded version of the rank inequality conjectured by Sivek (see [1]):
    \begin{corollary} For a closed 3-braid $L$ with $u(L)>0$, $rank(\widehat{HFK}_{m-1}(L,u(L)-1))\geq rank(\widehat{HFK}_m(L,u(L))).$
\end{corollary}
In [9], Ni showed that the above inequality holds if $L$ is a knot and $\widehat{HFK}(L,u(L))$ is supported in a single Maslov grading. For the remaining cases, we need to combine our Theorem 1.1 with the result in [2], which showed that $rank(\widehat{HFK}(L),u(L)-1)\geq |L|$ when $L$ is fibered.

If the equality holds, $L$ must be a knot. Examples when the equality holds include $L=cl(\alpha a_1a_2a_3a_1)=5_1$ and $L=cl(\alpha^d)=T(3,d),$ where $d>1,$ $3\nmid d$.
    
    \section{Preliminaries}
    The main tool used in Cheng's paper is the exact triangle introduced by Ozsv\'ath and Szab\'o [12]. This exact triangle describes the relationship between Floer homology of links related by skein relation, which is illustrated in Figure 2.
    \begin{figure}[H]
 \centering
  \includegraphics[width=0.5\textwidth]{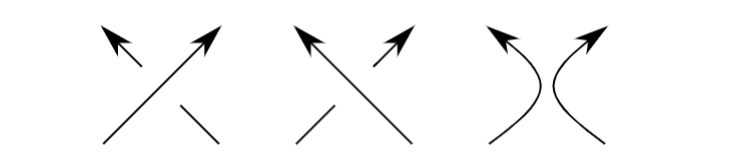}
 \caption{The skein relation, with $L_+$ $L_-$ and $L_0$ from left to right.}
 \end{figure}
    Namely, we have the following proposition [12]:
    \begin{proposition} There is an exact sequence
    $$\cdots\to \widehat{HFK}_m(L_+,s)\to \widehat{HFK}_m(L_-,s)\to \widehat{HFK}_{m-1}(L_0,s)\to \widehat{HFK}_{m-1}(L_+,s)\to\cdots$$ if $L_0$ has more components than $L_+$, and if $L_0$ has fewer components than $L_+$ there is an exact sequence
    $$\cdots\to \widehat{HFK}_m(L_+,s)\to \widehat{HFK}_m(L_-,s)\to (\widehat{HFK}(L_0)\otimes J)_{m-1,s}\to \widehat{HFK}_{m-1}(L_+,s)\to\cdots$$
    Here $J\cong \mathbb{F}[0,1]\oplus \mathbb{F}^2[-1,0]\oplus \mathbb{F}[-2,-1]$, where $[i,j]$ stands for Maslov and Alexander grading respectively.
    \end{proposition}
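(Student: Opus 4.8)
The plan is to deduce this oriented skein exact sequence from the foundational exact triangle for a triple of attaching-curve systems in Heegaard Floer homology established by Ozsvath and Szabo. The key geometric input is that $L_+$, $L_-$, and $L_0$ agree outside a ball containing the distinguished crossing, so they can all be read off a single multi-pointed Heegaard diagram $(\Sigma,\alpha,\beta,\mathbf{w},\mathbf{z})$ that differs only inside a small disk. First I would isotope the diagram so that near the crossing the local picture is modeled on a standard configuration in which one attaching circle appears in three positions $\beta_0,\beta_+,\beta_-$ meeting pairwise in a small triangle. With this normalization, the pairwise Floer complexes $CF(\alpha,\beta_\bullet)$, equipped with the $\mathbf{z}$-filtration that records the Alexander grading, compute $\hat{CFK}$ of $L_+$, $L_-$, and $L_0$ respectively.

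The core step is to invoke the exact triangle for these three systems. The maps in the sequence are induced by counting holomorphic triangles whose domain is the small triangle region near the crossing and which are holomorphic and filtered elsewhere. I would check that the triangle map $\hat{CFK}(L_-)\to\hat{CFK}(L_0)$ drops the Maslov grading by exactly $1$ and preserves the Alexander grading $s$, which accounts for the index shift $m\to m-1$ and the fixed $s$ in the statement, while the other two maps preserve $m$. The technical heart is establishing filtered exactness: that consecutive maps compose to something null-homotopic, and that the mapping cone of $\hat{CFK}(L_+)\to\hat{CFK}(L_-)$ is filtered quasi-isomorphic to $\hat{CFK}(L_0)$. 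This is where the small-triangle area and energy estimates, an admissibility argument, and the count of local holomorphic triangles enter.

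The component discrepancy is what forces the two forms of the sequence. An oriented resolution at a crossing between two strands of the \emph{same} component of $L_\pm$ splits that component, so $L_0$ has one more component and its adapted diagram carries the same number of basepoints as that of $L_\pm$; here the triangle directly yields the first sequence. When the two strands lie on \emph{different} components, $L_0$ merges them and has one fewer component, so its minimal diagram carries one fewer pair of basepoints. To compare the two complexes I would add a canceling pair of basepoints (a free stabilization) to the $L_0$ diagram; this tensors $\hat{HFK}(L_0)$ with the homology of the purely local model at the merge site. A direct computation of that local complex produces the bigraded factor, and I would verify that it factors as $(\mathbb{F}[0,0]\oplus\mathbb{F}[-1,-1])\bigotimes(\mathbb{F}[0,1]\oplus\mathbb{F}[-1,0])\cong J$, the two tensor slots corresponding to the two new basepoints and the summands assembling into $\mathbb{F}[0,1]\oplus\mathbb{F}^2[-1,0]\oplus\mathbb{F}[-2,-1]$ exactly as claimed.

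The main obstacle I anticipate is the bigraded bookkeeping rather than the existence of the triangle: pinning down the absolute Maslov normalization and the Alexander grading across a change in the number of basepoints is delicate, and one must confirm that the stabilization isomorphism is bigraded so that the shifts assemble into precisely the factor $J$ and into the single grading drop of the connecting map. A secondary subtlety is verifying that the small-triangle map realizes the connecting homomorphism of the mapping cone, and not merely some filtered chain map; this requires the admissibility and energy-filtration arguments to remain compatible with the $\mathbf{z}$-filtration that defines the Alexander grading.
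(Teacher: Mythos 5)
The paper does not prove this proposition at all: it is imported verbatim from Ozsv\'ath--Szab\'o [8] and used as a black box, so there is no internal proof to compare against. Measured against the source it cites, your outline does follow the general strategy of [8] --- realize the three links by three curves in a common pointed Heegaard diagram, invoke the exact triangle coming from counting holomorphic triangles, and then account for the change in the number of basepoints by a local tensor factor --- and your factorization $(\mathbb{F}[0,0]\oplus\mathbb{F}[-1,-1])\bigotimes(\mathbb{F}[0,1]\oplus\mathbb{F}[-1,0])\cong J$ is numerically consistent with the stated $J$. So the approach is not wrong, but as written it is a plan rather than a proof.

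Two points deserve flagging. First, the setup step is glossed over in a way that hides a real issue: a crossing change is \emph{not} an isotopy of a single $\beta$-curve inside a fixed diagram for the link complement, so the ``one attaching circle in three positions meeting pairwise in a small triangle'' picture does not exist until you first realize $L_+$, $L_-$, $L_0$ as three fillings of an auxiliary unknot encircling the crossing (equivalently, set up the specific local genus-one winding region used in [8]); without that intermediate construction the triangle maps you want to count are not defined. Second, everything that makes the theorem true --- that the mapping cone of the triangle-induced map $\hat{CFK}(L_+)\to\hat{CFK}(L_-)$ is filtered quasi-isomorphic to $\hat{CFK}(L_0)$ (with the correct Alexander filtration on both sides), and the absolute Maslov/Alexander normalization that places the grading drop on the map into $\hat{HFK}_{m-1}(L_0,s)$ and pins down the bigradings in $J$ --- is named as ``the technical heart'' but not carried out. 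Since the proposition is a citation, the honest resolution is to cite [8] rather than to reprove it; if you do want to reprove it, the filling-of-an-auxiliary-unknot step and the filtered mapping-cone identification are the parts you must actually supply.
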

Hereafter we let $H_m(L_0,s)\cong\widehat{HFK}_m(L_0,s)$ if $|L_0|>|L_+|$ and $H_m(L_0,s)\cong(\widehat{HFK}(L_0)\otimes J)_{m,s}$ if $|L_0|<|L_+|.$ Also, $\overline{u}(L_0)=u(L_0)$ if $|L_0|>|L_+|$ and $\overline{u}(L_0)=u(L_0)+1$ if $|L_0|<|L_+|.$

The exact sequence can provide much information about $\widehat{HFK}$ of a knot (or link), if we know what the top Alexander grading $u(L)$ is. In fact, Ni [7] showed that $u(L)=\frac{|L|-\chi(L)}{2}$; when $L$ is a knot this coincides with the genus of $L.$ It is generally not very easy to know the exact value of $\chi(L)$, but for a closed 3-braid $L=cl(w)$, Xu's work [16] tells us that $\chi(L)$ is related to the word length $l(w)$. According to Xu, we have the following classification of closed 3-braids:
\begin{proposition} Let $\alpha=a_2a_1=a_3a_2=a_1a_3$. Every conjugacy class in $B_3$ can be represented as a shortest word in $a_1,a_2,a_3$ which is unique up to symmetries, such that the word has one of the following forms:
\\(i) $\alpha^d P$;
\\(ii) $N\alpha^{-d}$;
\\(iii) $NP$.
\\Here $d\geq 0$, $N^{-1}$ and $P$ are nondecreasing positive words, $P$ or $N$ may be empty. A positive word $a_{\epsilon_1}\cdots a_{\epsilon_n}$ is nondecreasing if for each $1\leq j<n,$ $\epsilon_{j+1}=\epsilon_{j}$ or $\epsilon_j+1$, where the subscript for $a$ is understood cyclically.
\end{proposition}
Xu's results also showed that for a shortest word $w$ as above, Euler characteristic of $cl(w)$ is $3-l(w)$. This means decreasing the word length might also decrease the genus of the corresponding link. When the shortest possible word length (up to conjugation) of a link in the skein triple is small enough, the skein exact sequence can result in an isomorphism between the second-to-top terms of the other two links in the triple, and the isomorphism shifts the Maslov grading in a fixed way. This isomorphism allows us to reduce a closed 3-braid to a braid whose second-to-top term is known.

Note that Xu's convention for $\sigma_1,\sigma_2$ is different from the one used in our paper. $\sigma_1,\sigma_2$ in [16] should be $\sigma_1^{-1},\sigma_2^{-1}$ under the convention of this paper. We change Xu's convention in order to use the skein exact triangle more conveniently. Xu's statements still hold under the new convention, because if $L$ is a closed 3-braid, and $\overline{L}$ is the mirror of $L$ obtained by switching over- and under-crossings in a projection for $L$, then $u(L)=u(\overline{L})$ [10].

An oriented link $L$ is called strongly quasi-positive when $L$ is the closure of a word $\beta\in B_n$ such that $\beta$ is the product of $\sigma_{i,j}=(\sigma_i\cdots\sigma_{j-2})\sigma_{j-1}(\sigma_i\cdots\sigma_{j-2})^{-1}$ [14]. Note that if $w\in B_3$ is positive in the generators $a_1,a_2,a_3$, then $cl(w)$ is strongly quasi-positive.

The following identities will be important in our computation:
\begin{align}
    a_{i}\alpha&=\alpha a_{i+1}\\
    a_i^{-1}a_j&=a_{i+2}a_{j+2}^{-1}\\
    a_{i+1}a_ia_{i+1}&=a_ia_{i+1}a_i\\
    \alpha a_i^{-1}&=a_{i+2}^{-1}\alpha=a_{i+1}\\
    \alpha^{-1}a_i&=a_{i+1}\alpha^{-1}=a_{i+2}^{-1}
\end{align}
    \section{Computing Maslov Grading of the Top Term}
    In [8], Ni has shown that the top term of $\widehat{HFK}$ of $cl(w)$ has rank 1 except when (i) $w$ is 0; (2) $w$ (or $w^{-1}$) is of type $P,$ and (up to conjugation) either starts with $a_1$ and ends with $a_3$ or is a power of $a_1$. However, to compute the second-to-top term, we must not only know the rank of the top term but also know the explicit Maslov grading. To describe the top term explicitly as a graded group, we need the following lemma:
    \begin{lemma}
        Consider a skein triple $L_0,L_-,L_+.$ We have
            \begin{align*}&(1)\quad\widehat{HFK}_m(L_+,u(L_+))\cong \widehat{HFK}_m(L_-,u(L_-))\text{ if $u(L_+)=u(L_-)>\overline{u}(L_0)$},\\
            &(2)\quad \widehat{HFK}_m(L_+,u(L_+))\cong H_m(L_0,\overline{u}(L_0))\cong \widehat{HFK}_m(L_0,u(L_0))\text{ if $u(L_+)=\overline{u}(L_0)>u(L_-)$},\\
            &(3)\quad \widehat{HFK}_m(L_-,u(L_+))\cong H_{m-1}(L_0,\overline{u}(L_0))\cong \widehat{HFK}_{m-1}(L_0,u(L_0))\text{ if $u(L_-)=\overline{u}(L_0)>u(L_+)$}.\end{align*}
    \end{lemma}
    \begin{proof}
        If $u(L_+)=u(L_-)>\overline{u}(L_0),$ then $H(L_0,u(L_+))$ is trivial. From the exact sequences $$\cdots\to \widehat{HFK}_m(L_+,s)\to \widehat{HFK}_m(L_-,s)\to H_{m-1}(L_0,s)\to \widehat{HFK}_{m-1}(L_+,s)\to\cdots$$
        we have $\widehat{HFK}_m(L_+,u(L_+))\cong \widehat{HFK}_m(L_-,u(L_-)).$

        Statements (2) and (3) can be proved with similar reasoning.
    \end{proof}
    Using the above lemma, we have the following propositions about the top term:
\begin{proposition}Suppose $w=\alpha^d P$ is a word in Xu's form, $d>0,$ $L=cl(w).$ Then, $\widehat{HFK}(L,u(L))\cong\mathbb{F}[0]$.
\end{proposition}
\begin{proof}
We prove by induction on the length of $P.$ When the length $l(P)=0,$ $L=cl(\alpha^d)$ and hence is a torus link; thus $\widehat{HFK}(L,u(L))\cong\mathbb{F}[0]$.

Assume $\widehat{HFK}(L,u(L))\cong\mathbb{F}[0]$ whenever $0\leq l(P)<n.$ Then, when $l(P)=n:$

Assume $P$ ends in $a_1.$ Then $w=a_1a_3 \alpha^{d-1}P'a_1$. Let $L_+=L,$ $L_0=cl(a_1a_3\alpha^{d-1}P'),$ $L_-=cl(a_1a_3\alpha^{d-1}P'a_1^{-1}).$ 

Since $a_1a_3\alpha^{d-1}P'a_1^{-1}\sim a_3\alpha^{d-1}P'$, we see that $u(L_+)=\overline{u}(L_0)>u(L_-).$ Thus, by Lemma 3.1 $\widehat{HFK}(L_+,u(L_+))\cong \widehat{HFK}(L_0,u(L_0))\cong \mathbb{F}[0]$.
\end{proof}

\begin{proposition}
Suppose $a_2^{-1}P$ is a shortest word in Xu's form and $l(P)>0$, which means $P$ cannot start or end with $a_2.$ Let $L=cl(a_2^{-1}P)$, and $k=\lfloor\frac{l(UT(P))}{3}\rfloor.$ Then, $\widehat{HFK}(L,u(L))\cong\mathbb{F}[k]$.
\end{proposition}
\begin{proof}
Assume $P=P_1a_j^2P_2$ for some $j\in \{1,2,3\}$, where $P_1a_j,a_jP_2$ are nondecreasing positive words. Let $L_+=L,$ $L_0=cl(a_2^{-1}P_1a_jP_2),$ $L_-=cl(a_2^{-1}P_1P_2).$ By Lemma 3.1, $\widehat{HFK}_m(L_+,u(L_+))\cong \widehat{HFK}_m(L_0,u(L_0))$.

Hence, we reduce to the case when $P=UT(P).$

Also, because $a_2^{-1}a_3^n= a_1^na_2^{-1},$ by conjugation we may assume $P$ starts with $a_1.$

If $k=0,$ we only need to consider the case $a_2^{-1}P=a_2^{-1}a_1$. In this case $L$ is the unknot, so $\widehat{HFK}(L,u(L))\cong \mathbb{F}[0]$.

Thus, the proposition holds if $k=0.$ Suppose $\widehat{HFK}(L,u(L))\cong \mathbb{F}[k]$ whenever $0\leq k<K.$ Then, when $k=K:$

If $P=P'a_1a_2a_3,$ let $L_+=cl(a^{-1}_2P' a_1a_2a_3),$ $L_0=cl(a^{-1}_2 P' a_1a_3),$ $L_-=cl(a^{-1}_2P' a_1a_2^{-1}a_3).$ 

Since $a^{-1}_2 P' a_1a_3\sim P'a_3$, by Lemma 3.1, $\widehat{HFK}_m(L_+,u(L_+))\cong \widehat{HFK}_m(L_-,u(L_-))$.

Also, $a^{-1}_2P' a_1a_2^{-1}a_3=a^{-1}_2P' a_1^2a_2^{-1}\sim a^{-2}_2P' a_1^2$. By Lemma 3.1 and the inductive hypothesis, we have $$\widehat{HFK}_m(L_-,u(L_-))\cong \widehat{HFK}_{m-1}(cl(a^{-1}_2P' a_1^2),u(cl(a^{-1}_2P' a_1^2)),\quad\widehat{HFK}(L_-,u(L_-))\cong \mathbb{F}[k].$$

If $P=P'a_1=P''a_3a_1$, let $L_+=cl(a^{-1}_2P' a_1),$ $L_0=cl(a^{-1}_2P'' a_3),$ $L_-=cl(a^{-1}_2 P''a_3a^{-1}_1).$ 

We have $a^{-1}_2 P''a_3a^{-1}_1\sim P''a_3\alpha^{-1}=P''a^{-1}_1$, so $u(L_-)<u(L_+)=\overline{u}(L_0)$. By Lemma 3.1, $\widehat{HFK}_{m}(L_+,u(L_+))\cong \widehat{HFK}_m(L_0,u(L_0)),$ which means $\widehat{HFK}(L_+,u(L_+))\cong \mathbb{F}[k]$ by discussion of the case when $P$ ends with $a_3.$

All in all, $\widehat{HFK}(L,u(L))\cong \mathbb{F}[k]$. We have proved the proposition by induction.\end{proof}

\begin{proposition} Suppose $L=cl(P)$, $P$ starts with $a_1$ and ends with $a_3$, $k=\frac{l(UT(P))}{3}>0$. Then, $\widehat{HFK}(L,u(L))\cong\mathbb{F}[0]\oplus \mathbb{F}[k-1]$.
\end{proposition}
\begin{proof}
Let $L_0=L,$ $L_+=cl(a_2P)$, $L_-=cl(a_2^{-1}P)$. By analyzing word length, we have $u(L_+)=u(L_-)=\overline{u}(L_0)$. Hence, there is an exact triangle among the top terms of $\widehat{HFK}(L_+),$ $H(L_0),$ $\widehat{HFK}(L_-).$

Set $u=u(L_+)$. By Propositions 3.2 and 3.3 we know that $\widehat{HFK}(L_+,u)\cong\mathbb{F}[0]$ and $\widehat{HFK}(L_-,u)\cong\mathbb{F}[k]$.

Consider the exact triangle:

\[
\begin{tikzcd}[row sep=large, column sep=large]
& H(L_{0}, u) \arrow[dr, "{[+0]}"] & \\
\widehat{HFK}(L_{-}, u) \arrow[ur, "{[-1]}"] \arrow[rr, "{[+0]}" above, leftarrow] & & \widehat{HFK}{(L_+, u)}
\end{tikzcd}
\]

The $[-1]$ and $[+0]$ labels mean that the map shifts the Maslov gradings by $-1$ and $+0$ respectively.

From the exact triangle, we see that $\widehat{HFK}(L,u(L))\cong H(L_0,u)\cong\mathbb{F}[0]\oplus \mathbb{F}[k-1].$
\end{proof}
\begin{proposition}Suppose $w=NP$ is a shortest word, $l(N)>0,$ $l(P)>0,$ $L=cl(w).$ Suppose $P$ ends with $a_1.$ Then, $\widehat{HFK}(L,u(L))=\mathbb{F}[\lfloor\frac{l(UT(P))-2s}{3}\rfloor+l(N)-1]$, where $$s=\begin{cases}
    \lfloor\frac{l(UT(N))-1}{2}\rfloor&\quad \text{if $N$ starts with $a_3^{-1}$},\\
    \lfloor\frac{l(UT(N))}{2}\rfloor&\quad \text{if $N$ starts with $a_2^{-1}$}.
\end{cases}$$
\end{proposition}
\begin{proof}
Assume $N=N'a_j^{-2}N''$ for some $j\in \{1,2,3\}$, where $a_j N'^{-1},{N''}^{-1}a_j$ are nondecreasing positive words. Let $L_-=L,$ $L_0=cl(N'a_j^{-1}N''P),$ $L_+=cl(N'N''P).$

Since $u(L_+)<u(L_-)=\overline{u}(L_0),$ we have $\widehat{HFK}_m(L_-,u(L_-))\cong \widehat{HFK}_{m-1}(L_0,u(L_0))$ by Lemma 3.1. This means Proposition 3.5 holds for $L_-$ if and only if it holds for $L_0.$ 

Hence, we reduce to the case when $N=UT(N)$.

The case when $l(N)=1$ is as in Proposition 3.3. Assume the proposition holds whenever $N=UT(N)$ and $1\leq l(N)< K.$ Then, when $N=UT(N)$ and $l(N)=K:$

Assume $P=P'a_1.$

If $N=a_3^{-1}a_2^{-1}N^*,$ let $L_-=L,$ $L_0=cl(a_2^{-1}N^*P),$ $L_+=cl(a_3a_2^{-1}N^*P).$

We have $a_3a_2^{-1}N^*P\sim \alpha a_2^{-1}N^* P'=a_3 N^* P'.$ Hence, $u(L_+)<u(L_-)=\overline{u}(L_0),$ and $\widehat{HFK}_m(L_-,u(L_-))\cong \widehat{HFK}_{m-1}(L_0,u(L_0))$ by Lemma 3.1. By inductive hypothesis, $\widehat{HFK}(L_0,u(L_0))\cong \mathbb{F}[\lfloor\frac{l(UT(P))-2s}{3}\rfloor+l(N)-2],$ where $s=\lfloor\frac{l(a_2^{-1}N^*)}{2}\rfloor=\lfloor\frac{l(N)-1}{2}\rfloor$. Thus $\widehat{HFK}(L_-,u(L_-))\cong \mathbb{F}[\lfloor\frac{l(UT(P))-2s}{3}\rfloor+l(N)-1],$ which is consistent with our proposition.

If $N=a_2^{-1}a_1^{-1}N^*,$ let $L_-=L,$ $L_0=cl(a_1^{-1}N^*P),$ $L_+=cl(a_2a_1^{-1}N^*P).$

We have $a_1^{-1}N^*P\sim N^*P'.$ Hence, $\overline{u}(L_0)<u(L_-)=u(L_+),$ and $\widehat{HFK}_m(L_-,u(L_-))\cong \widehat{HFK}_{m}(L_+,u(L_+))$ by Lemma 3.1. Since $a_2a_1^{-1}N^*P\sim a_1^{-1}N^*Pa_2$ and $B_3$ has rotation symmetry, we have $\widehat{HFK}(L_+,u(L_+))\cong \mathbb{F}[\lfloor\frac{l(UT(P))+1-2s}{3}\rfloor+l(N)-2]$ by inductive hypothesis, where $s=\lfloor\frac{l(a_1^{-1}N^*)-1}{2}\rfloor=\lfloor\frac{l(N)}{2}\rfloor-1$. Let $s'=s+1,$ then $\widehat{HFK}(L_-,u(L_-))\cong \mathbb{F}[\lfloor\frac{l(UT(P))+1-2s'+2}{3}\rfloor+l(N)-2]\cong \mathbb{F}[\lfloor\frac{l(UT(P))-2s'}{3}\rfloor+l(N)-1],$ which is consistent with our proposition.

Thus, we have proved the proposition by induction.
\end{proof}
\section{Computing Maslov Grading of the Second-to-top Term}
\subsection{Preliminary Lemmas}
In this section, we analyze at which Maslov gradings are the second-to-top terms non-trivial.

We need the following lemmas:
\begin{lemma}
        Consider a skein triple $L_0,L_-,L_+.$ We have \begin{align*}
            &(1)\quad \widehat{HFK}_m(L_+,u(L_+)-1)\cong \widehat{HFK}_m(L_-,u(L_-)-1)\text{ if $u(L_+)=u(L_-)>\overline{u}(L_0)+1$},\\
            &(2)\quad\widehat{HFK}_m(L_+,u(L_+)-1)\cong H_m(L_0,\overline{u}(L_0)-1)\text{ if $u(L_+)=\overline{u}(L_0)>u(L_-)+1$},\\
            &(3)\quad\widehat{HFK}_m(L_-,u(L_+)-1)\cong H_{m-1}(L_0,\overline{u}(L_0)-1)\text{ if $u(L_-)=\overline{u}(L_0)>u(L_+)+1$}.
        \end{align*}
    \end{lemma}
    \begin{proof}
        If $u(L_+)=u(L_-)>\overline{u}(L_0)+1,$ then $H(L_0,u(L_+)-1)$ is trivial. From the exact sequences $$\cdots\to \widehat{HFK}_m(L_+,s)\to \widehat{HFK}_m(L_-,s)\to H_{m-1}(L_0,s)\to \widehat{HFK}_{m-1}(L_+,s)\to\cdots$$
        we have $\widehat{HFK}_m(L_+,u(L_+)-1)\cong \widehat{HFK}_m(L_-,u(L_-)-1).$

        Statements (2) and (3) can be proved with similar reasoning.
    \end{proof}
    \begin{lemma}
        Consider a skein triple $L_0,L_-,L_+.$ If $L_-$ is fibered and strongly quasi-positive, then when $m\not=-1,$ $\widehat{HFK}_m(L_+,u(L_+)-1)\cong H_m(L_0,\overline{u}(L_0)-1)$.
    \end{lemma}
    \begin{proof}
        Set $u=u(L_+)$. Consider the exact sequences
        $$\cdots \widehat{HFK}_m(L_+,u-1)\xrightarrow{F_m}\widehat{HFK}(L_-,u-1)\xrightarrow{G_m}H_{m-1}(L_0,u-1)\to\cdots$$
        Since $L_-$ is strongly quasi-positive, $\tau(L_-)=u(L_-)=u-1$ [4], and $\widehat{HFK}(L_-,u-1)\cong \mathbb{F}[0]$ by Section 3. Hence, by the reasoning in Corollary 3.4 of Cheng's paper [3], $G_0$ is injective and hence $F_0$ is 0 by exactness. Thus, we have the exact sequence $0\to H_0(L_0,u-1)\to \widehat{HFK}_{0}(L_+,u-1)\to 0$, which means $\widehat{HFK}_0(L_+,u-1)\cong H_0(L_0,u-1)$. The exact sequences also yield that $\widehat{HFK}_m(L_+,u-1)\cong H_m(L_0,u-1)$ when $m\not=-1,0.$

        Hence, $\widehat{HFK}_m(L_+,u(L_+)-1)\cong H_m(L_0,\overline{u}(L_0)-1)$ when $m\not=-1.$
    \end{proof}
Now, we could do case-by-case discussion according to Xu's classification.
\subsection{Case 1: Suppose $w=\alpha^d P$ is a word in Xu's form, $d>0,$ $L=cl(w).$}

If $l(P)=0,$ $w$ is a positive word and the second-to-top term is as in Cheng's paper, which means it is supported at Maslov grading $-1$ because torus link is non-split [3]. If $d>1,l(P)>0,$ suppose $P$ starts with $a_1$. Then, $w=\alpha^{d-1}a_2a_1^2P'.$ Let $L_+=L,$ $L_0=cl(\alpha^{d-1}a_2a_1P')$, $L_-=cl(\alpha^{d-1}a_2P').$ $u(L_-)=u(L_+)-1=\overline{u}(L_0)-1$, and $L_-$ is fibered and strongly quasi-positive, with top term $\mathbb{F}[0]$. By Lemma 4.2, $\widehat{HFK}_m(L_+,u(L_+)-1)\cong H_{m
}(L_0,\overline{u}(L_0)-1)$ for $m\not=-1$. We have thus shown by induction that if $d>1$, \begin{align}\widehat{HFK}(L,u(L)-1)\cong \mathbb{F}^t[-1]\text{ for some $t\geq 0$}\end{align}
\begin{proposition}
    Suppose that $d=1,$ $l(P)>0,$ $k=\lfloor\frac{l(UT(P))}{3}\rfloor\geq 0:$
\begin{subequations}\label{eq:UTcases}
\begin{align}
\widehat{HFK}(L,u(L)-1)
  &\cong \mathbb{F}^t[-1]\text{ for some $t\geq 0$}, &&\text{if } l(UT(P))\equiv 1\!\!\pmod{3}, \label{eq:case1} \\
\widehat{HFK}(L,u(L)-1)
  &\cong \mathbb{F}[k-1]\oplus\mathbb{F}^t[-1]\text{ for some $t\geq 0$}, &&\text{if } l(UT(P))\equiv 2\!\!\pmod{3}, \label{eq:case2}\\
\widehat{HFK}(L,u(L)-1)
  &\cong \mathbb{F}[k-2]\oplus\mathbb{F}^t[-1]\text{ for some $t\geq 0$}, &&\text{if } l(UT(P))\equiv 0\!\!\pmod{3}. \label{eq:case3}
\end{align}
\end{subequations}
\end{proposition}
\begin{proof}

If $P=a_1^{n}$ or $a_1^{n}a_2^{m},$ $n,m>0$, then $w=\alpha P=a_2a_1P$ is a positive braid word. Also, $w$ is fibered and hence non-split. Thus, $\widehat{HFK}(L,u(L)-1)$ is supported at Maslov grading -1 by Cheng's formula. By analyzing word length we see that $u(L)>0,$ so $L$ is not the unknot. Then, from Cheng's formula we see that the second-to-top term of $\widehat{HFK}(L,u(L)-1)$ has rank $p(L)+|L|-s(L)\geq p(L)>0.$

If $P=a_1^{n}a_2^{m}a_3^{l}=a_1^{n}a_2^{m}a_2a_1^{l}a_2^{-1},$ then $\alpha P=a_2a_1^{n+1}a_2^{m}a_2a_1^{l}a_2^{-1}\sim a_1^{n+1}a_2^{m}a_2a_1^{l}$ is a positive word. Since $P$ is fibered, $P$ is non-split. Hence, $\widehat{HFK}(L,u(L)-1)$ is supported at Maslov grading -1. Also, by analyzing the word length we could see that $u(L)>0,$ which means $L$ is not the unknot and hence $\widehat{HFK}(L,u(L)-1)$ has rank $>0.$

When $l(UT(P))>3:$

If $P=P_1a_j^2P_2$ for some $j\in \{1,2,3\},$ with $P_1a_j,a_jP_2$ nondecreasing positive words, consider the skein triple $L_+=L, L_0=cl(\alpha P_1a_jP_2), L_-=cl(\alpha P_1P_2).$ 

We have $u(L_-)=u(L_+)-1=\overline{u}(L_0)-1$, and $L_-$ is strongly quasi-positive. If $P_1P_2$ is a nondecreasing positive word, then $L_-$ is fibered. If $P_1P_2$ is not a nondecreasing positive word, then $P_1=P'_1a_{j-1}$ and $P_2=a_{j+1}P'_2,$ with $P'_1a_{j-1},a_{j+1}P'_2$ nondecreasing positive. By Lemma 2.7 of [8], $L_-$ is fibered if and only if $cl(\alpha P_1'a_{j-1}^2a_{j+1}^2P'_2)$ is fibered. By (1), $P_1'a_{j-1}^2a_{j+1}^2P'_2=P_1'a_{j-1}\alpha a_{j+1}P'_2=\alpha P'_3a_j a_{j+1} P'_2$, with $P'_3 a_j$ nondecreasing positive. This means $cl(\alpha P_1'a_{j-1}^2a_{j+1}^2P'_2)$ is fibered and hence $L_-$ is fibered. Hence, by Lemma 4.2, $\widehat{HFK}_m(L_+,u(L_+)-1)\cong H_m(L_0,\overline{u}(L_0)-1)$ when $m\not=-1.$

Thus, we reduce to the case $P=UT(P)$. During this process we only alters the Maslov grading -1 part of the second-to-top term of $\widehat{HFK}(L)$. 

If $l(UT(P))\equiv 1$ mod 3, let $P=a_1a_2a_3P'=a_1a_2a_3P''a_1$. By (3), we have $w=a_3a_1 a_2a_1a_3P'=a_3a_1 a_2a_2a_1P'.$ 

Now, let $L_+=L,$ $L_0=cl(a_3a_1 a_2a_1P'),$ $L_-=cl(a_3a_1a_1P').$

By (3), we have $a_3a_1 a_2a_1P'\sim a_1a_3a_1 a_2a_1P''=\alpha^2 a_2P''.$ Also, $a_3a_1a_1P'\sim \alpha a_1^2 P''.$ Hence, $u(L_-)=u(L_+)-1=\overline{u}(L_0)-1$. By (6), $H(L_0,\overline{u}(L_0)-1)$ is supported at Maslov grading -1. Moreover, $L_-$ is fibered by Proposition 3.2, and $L_-$ is strongly quasi-positive. By Lemma 4.2, $\widehat{HFK}(L,u(L)-1)$ is supported at Maslov grading -1.

If $l(UT(P))\equiv 2$ mod 3, let $P=P'a_1a_2$. Let $L_+=L,$ $L_0=cl(\alpha P'a_1),$ $L_-=cl(\alpha P'a_1a_2^{-1}).$ 

We have $\alpha P'a_1a_2^{-1}\sim a_1 P'a_1\sim a_1^{2}P'$, and $a_1^2P'$ ends with $a_3.$

Thus, $u(L_-)=u(L_+)-1=\overline{u}(L_0)-1.$ By Proposition 3.4, $\widehat{HFK}(L_-,u(L_-))$ is $\mathbb{F}[0]\oplus\mathbb{F}[k-1].$ By (7a), the second-to-top term of $\widehat{HFK}(L_0)$ is supported at Maslov grading -1.

Now, take $u=u(L_+)$ and consider the exact sequences $$\cdots \to H_{m}(L_0,u-1)\to\widehat{HFK}_{m}(L_+,u-1)\xrightarrow{F_m}\widehat{HFK}_{m}(L_-,u-1)\xrightarrow{G_m}H_{m-1}(L_0,u-1)\to\cdots$$

If $k=1,$ let $L'_+=cl(\alpha a_1^2a_2a_3a_1a_2),$ $L'_0=L_+,$ $L'_-=cl(\alpha a_2a_3a_1a_2)$. By Lemma 4.2, $H_m(L'_0,\overline{u}(L'_0)-1)\cong \widehat{HFK}_m(L'_+,u(L'_+)-1)$ when $m\not=-1$. In fact, $L'_+$ is just $10_{161}$ and \textit{KnotInfo} [5] shows that $\widehat{HFK}(L'_+,u(L'_+)-1)\cong\mathbb{F}[0]\oplus\mathbb{F}[-1]$. All in all, $\widehat{HFK}(L_+,u-1)\cong\mathbb{F}[0]\oplus\mathbb{F}^t[-1]$ for some $t\geq 0.$

If $k>1,$ since $H_{k-1}(L_0,u-1)\cong H_{k-2}(L_0,u-1)\cong 0$ by (7a), we have an exact sequence $$0\to \widehat{HFK}_{k-1}(L_+,u-1)\xrightarrow{F_{k-1}} \widehat{HFK}_{k-1}(L_-,u-1)\xrightarrow{G_{k-1}} 0,$$ which means $\widehat{HFK}_{k-1}(L_+,u-1)\cong\widehat{HFK}_{k-1}(L_-,u-1)\cong\mathbb{F}$. 

If $k>2,$ since $\widehat{HFK}_{k-2}(L_-,u-1)\cong 0,$ we have $0\to\widehat{HFK}_{k-2}(L_+,u-1)\to 0,$ so $\widehat{HFK}_{k-2}(L_+,u-1)\cong0.$ Also, $H_{0}(L_0,u-1)\cong0$ by (7a) and $G_{k-1}$ is 0. Yet $L_-$ is strongly quasi-positive, which means $\tau(L_-)=u-1,$ so some element in $\widehat{HFK}(L_-,u-1)$ is mapped to the top generator of $\widehat{HF}(\#^{|L|-1} S^1\times S^2) $ under map induced by inclusion $\iota:\widehat{CFK}(L_-,u-1)\to \widehat{CF}(\#^{|L|-1} S^1\times S^2)$. Following the reasoning in Corollary 3.4 of [3], this means the map $G:\widehat{HFK}(L_-,u-1)\to H(L_0,u-1)$ is nontrivial, and hence either $G_{k-1}$ or $G_0$ is nontrivial. That is, $G_0$ is nontrivial and therefore injective ($\widehat{HFK}_0(L_-,u-1)$ is 1-dimensional), and hence $F_0$ is 0, which gives rise to the exact sequence $0\to \widehat{HFK}_0(L_+,u-1)\to 0$. Hence, $\widehat{HFK}_0(L_+,u-1)\cong0$. Also, the exact sequences show that $\widehat{HFK}_m(L_+,u-1)\cong H_m(L_0,u-1)$ when $m\not=0,-1,k-2,k-1$. Thus, $\widehat{HFK}(L_+,u-1)$ is $\mathbb{F}[k-1]\oplus\mathbb{F}^t[-1]$ for some $t\geq 0$.

If $k=2,$ we have $\widehat{HFK}_{1}(L_+,u-1)\cong \mathbb{F}$ as above. $F_0$ is trivial by the same argument in the last paragraph, which gives rise to exact sequence $0\to \widehat{HFK}_0(L_+,u-1)\to 0$. Therefore, $\widehat{HFK}_0(L_+,u-1)$ is trivial. Also, the exact sequences show that $\widehat{HFK}_m(L_+,u-1)\cong H_m(L_0,u-1)$ when $m\not=0,-1,1$. Thus, $\widehat{HFK}(L_+,u-1)$ is $\mathbb{F}[1]\oplus\mathbb{F}^t[-1]$ for some $t\geq 0$.

If $l(UT(P))\equiv 0$ mod 3, let $P=a_1a_2a_3 P'=a_1a_2a_3 P''a_3.$ Let $L_+=L,$ $L_0=cl(a_2 a_1a_2a_3P')$, $L_-=cl(a_2a_2a_3P').$

We have $a_2a_2a_3P'\sim \alpha a_2a_3P''$. Thus, $L_-$ is fibered and strongly quasi-positive, and $u(L_-)=u(L_+)-1=\overline{u}(L_0)-1.$ By Lemma 4.2 and (7b), $\widehat{HFK}(L,u(L)-1)\cong\mathbb{F}[k-2]\oplus \mathbb{F}^t[-1]$ for some $t\geq 0$ because $\widehat{HFK}(L_0,u(L_0)-1)\cong\mathbb{F}[k-2]\oplus \mathbb{F}^{t'}[-1]$ for some $t'\geq 0$.

We have thus proved the proposition.\end{proof}
\subsection{Case 2: Suppose $w=NP,$ $l(N)>1,$ $l(P)>1,$ $L=cl(w).$}

We say that $L$ has property $(*)$ if $\widehat{HFK}(L,u(L))$ is $\mathbb{F}[p]$ and $\widehat{HFK}(L,u(L)-1)$ is supported at Maslov grading $p-1.$
\begin{proposition}
    If $|l(UT(N))-l(UT(P))|=0,$ then $L$ is homologically $\delta$-thin and hence has property $(*)$.
\end{proposition} 
\begin{proof}
First, assume $N$ starts with $a_2^{-1}$ and $P$ ends with $a_1.$

When $l(UT(N))=l(UT(P))=1,$ $NP=\sigma_{2}^{-p}\sigma_1^{q}$ ($p,q>0$), which is alternating and hence it is homologically $\delta$-thin [6].

Assume whenever $1\leq l(UT(N))=l(UT(P))<K$ and $N$ starts with $a_2^{-1}$ and $P$ ends with $a_1,$ we have $NP=
    \sigma_2^{-p_1}\sigma_1^{q_1}\cdots \sigma_2^{-p_k}\sigma_1^{q_k}$ or $\sigma_1^{q_0}\sigma_2^{-p_1}\sigma_1^{q_1}\cdots \sigma_2^{-p_k}\sigma_1^{q_k}.$ Here, each $p_i,q_i>0$. Then, when $l(UT(N))=l(UT(P))=K$:

We may suppose that $N=a_2^{-p}N'=a_2^{-p}a_1^{-1}N''$ and $P=P'a_1^q=P''a_3a_1^q,$ $p,q>0.$ Conjugating by $\alpha^2,$ $N'P'$ becomes a word starting with $a_2^{-1}$ and ending with $a_1.$ By inductive hypothesis, $\alpha^2 N'P'\alpha^{-2}=\sigma_2^{-p_1}\sigma_1^{q_1}\cdots \sigma_2^{-p_k}\sigma_1^{q_k}$ or $\sigma_1^{q_0}\sigma_2^{-p_1}\sigma_1^{q_1}\cdots \sigma_2^{-p_k}\sigma_1^{q_k}$. Thus, \begin{align*}
N'P'&=\alpha^3N'P'\alpha^{-3}\\&=\alpha\sigma_2^{-p_1}\sigma_1^{q_1}\cdots \sigma_2^{-p_k}\sigma_1^{q_k}\alpha^{-1}\text{ or }\alpha\sigma_1^{q_0}\sigma_2^{-p_1}\sigma_1^{q_1}\cdots \sigma_2^{-p_k}\sigma_1^{q_k}\alpha^{-1},\end{align*}
and
\begin{align*}NP&=a_2^{-p}\alpha\sigma_2^{-p_1}\sigma_1^{q_1}\cdots \sigma_2^{-p_k}\sigma_1^{q_k}\alpha^{-1}a_1^{q}\\&=\sigma_2^{-p+1}\sigma_1\sigma_2^{-p_1}\sigma_1^{q_1}\cdots \sigma_2^{-p_k}\sigma_1^{q_k-1}\sigma_2^{-1}\sigma_1^{q}\end{align*} or \begin{align*}
    NP&=a_2^{-p}\alpha\sigma_1^{q_0}\sigma_2^{-p_1}\sigma_1^{q_1}\cdots \sigma_2^{-p_k}\sigma_1^{q_k}\alpha^{-1}a_1^{q}\\&=\sigma_2^{-p+1}\sigma_1^{q_0+1}\sigma_2^{-p_1}\sigma_1^{q_1}\cdots \sigma_2^{-p_k}\sigma_1^{q_k-1}\sigma_2^{-1}\sigma_1^{q}.\end{align*}

We thus show by induction that if $N$ starts with $a_2^{-1}$ and $P$ ends with $a_1,$ $l(UT(N))=l(UT(P))$, then $NP=
    \sigma_2^{-p_1}\sigma_1^{q_1}\cdots \sigma_2^{-p_k}\sigma_1^{q_k}$ or $\sigma_1^{q_0}\sigma_2^{-p_1}\sigma_1^{q_1}\cdots \sigma_2^{-p_k}\sigma_1^{q_k}.$ This means $L$ is alternating and hence homologically $\delta$-thin.

The case when $N$ starts with $a_2^{-1}$ and $P$ ends with $a_3$ follows by mirror symmetry [10].

Hence $L$ has property $(*)$ if $l(UT(P))-l(UT(N))=0.$\end{proof}

Now, we may assume that $L$ has property $(*)$ whenever $l(P),l(N)\geq 2$ and $0\leq |l(UT(N))-l(UT(P))|<K.$

We have the following lemmas:
\begin{lemma}
    Consider a skein triple $L_0,L_-,L_+$. If $u(L_+)=u(L_-)>\overline{u}(L_0)+1$, then $L_+$ has property $(*)$ if and only if $L_-$ has property $(*)$. Analogous statements hold if $u(L_+)=\overline{u}(L_0)>u(L_-)+1$ or $u(L_-)=\overline{u}(L_0)>u(L_+)+1$.
\end{lemma}
\begin{proof}
    The lemma is immediate from Lemma 3.1 and Lemma 4.1.
\end{proof}
\begin{lemma}
     Consider a skein triple $L_0,L_-,L_+$. If $u(L_-)=u(L_+)-1=\overline{u}(L_0)-1$ and $L_-,L_+,L_0$ all have top term $\mathbb{F}[p],$ then $L_0$ has property $(*)$ if $L_+$ has property $(*).$
\end{lemma}
\begin{proof}

The lemma follows from the exact triangle below:
    \[
\begin{tikzcd}[row sep=large, column sep=large]
& H(L_{0}, \overline{u}(L_0)-1) \arrow[dr, "{[+0]}"] & \\
\widehat{HFK}(L_{-}, u(L_-)) \arrow[ur, "{[-1]}"] \arrow[rr, "{[+0]}" above, leftarrow] & & \widehat{HFK}{(L_+, u(L_+)-1)}
\end{tikzcd}
\]\end{proof}
Now we want to show that $L$ has property $(*)$ if $|l(UT(N))-l(UT(P))|=K.$
\begin{proposition}
    If $l(UT(P))-l(UT(N))=K$, $N$ starts with $a_2^{-1}$ and $P$ ends with $a_1,$ then $L$ has property $(*)$. 
\end{proposition}
\begin{proof}
Suppose $N=a_2^{-1}N'=a_2^{-2}N''$ or $a_2^{-1}a_1^{-1}N''$. If $P=P'a_1=P''a_1^2,$ let $L_-=cl(a_3^{-1}NP),$ $L_0=L,$ $L_+=cl(a_3NP)$.

By (4), we have $a_3NP\sim a_1^2a_3 N P''=a_3N''P''$ or $a_2N''P''$, so $u(L_+)+1<u(L_-)=\overline{u}(L_0)$. Also, by inductive hypothesis, $L_-$ has property $(*)$. Thus, by Lemma 4.5, $L_0$ has property $(*)$.

If $P=P'a_1=P''a_3a_1,$ let $L_+=cl(NPa_1),$ $L_0=L,$ $L_-=cl(NP')$. By Proposition 3.5, $\widehat{HFK}(L_+,u(L_+))\cong \widehat{HFK}(L_0,u(L_0))\cong \mathbb{F}[p]$ for some $p.$ Also, let $L'_+=L_0,$ $L'_0=L_-,$ $L'_-=cl(NP'a_1^{-1})$. Since $NP'a_1^{-1}\sim \alpha^{-1}N'P'\sim a_1^{-1} N'P''$, we have $u(L'_-)<u(L'_+)=\overline{u}(L'_0).$ By Lemma 3.1, $\widehat{HFK}(L'_0,u(L'_0))\cong \mathbb{F}[p].$ By discussion of the case $P=P''a_1^2,$ we see that $L_+$ has property $(*)$. Hence, $L_0$ has property $(*)$ by Lemma 4.6.
\end{proof}

Now, suppose $l(UT(N))>l(UT(P)).$
\begin{proposition}
    If $l(UT(N))-l(UT(P))=K$, $N$ starts with $a_2^{-1}$ and ends with $a_i^{-1}$, and $P$ starts with $a_{i+2}$ and ends with $a_1,$ then $L$ has property $(*)$.
\end{proposition}
\begin{proof}
If $N=a_2^{-1}N'a^{-2}_{i},$ $P=a_{i+2}P'a_1,$ let $L_0=L,$ $L_+=cl(Na_{i+1}P),$ $L_-=cl(Na_{i+1}^{-1}P)$.

By (5), we have \begin{align*}   Na_{i+1}^{-1}P&=\begin{cases}a_2^{-1}N'a^{-1}_{i+1}P''a_1 &\quad \text{if  }P'=a_{i+2}P'' \\a_2^{-1}N'a^{-1}_{i+2}P''a_1 &\quad \text{if }P'=a_{i}P''\end{cases}.
\end{align*} By analyzing word length we see that $u(L_-)<u(L_+)-1=\overline{u}(L_0)-1.$ By Lemma 4.5 and the inductive hypothesis, $L_0$ has property $(*).$

If $N=a_2^{-1}N'a_{i+1}^{-1}a^{-1}_{i},$ $P=a_{i+2}P'a_1,$ we first consider the case when $P'=a_{i+2}P''.$ 

Let $L_-=L,$ $L_0=cl(a_2^{-1}N'a_{i+1}^{-1}P)$, $L_+=cl(a_2^{-1}N'a_{i+1}^{-1}a_iP)$.

By (4), we have \begin{align*}a_2^{-1}N'a_{i+1}^{-1}a_iP&=\begin{cases}a_2^{-1}N''a_i P''a_1& \text{if } N'=N''a_{i+1}^{-1}\\a_2^{-1}N''a_{i+1} P''a_1 &\text{if } N'=N''a_{i+2}^{-1}\end{cases}.\end{align*} Then, we could see that $u(L_+)<u(L_-)-1=\overline{u}(L_0)-1.$ By Lemma 4.5 and the inductive hypothesis, $L_-$ has property $(*).$

If $P'=a_{i}P'',$ let $L_0=L,$ $L_+=cl(Na_{i+2}P),$ $L_-=cl(Na_{i+2}^{-1}P)$.

Since $Na_{i+2}^{-1}P=NP'a_1=a_2^{-1}N'a_{i+1}^{-1}P''a_1,$ we see that $u(L_-)<u(L_+)-1=\overline{u}(L_0)-1.$ By Lemma 4.5 and discussion of the case $P'=a_{i+2}P''$, $L_0$ has property $(*)$.\end{proof}
Now, in order to show that $L$ has property $(*)$ if $|l(UT(N))-l(UT(P))|=K$, $N$ starts with $a_2^{-1}$ and $P$ ends with $a_1$, it remains to consider the case when $l(UT(N))>l(UT(P))$, $N$ ends with $a_i^{-1}$ and $P$ starts with $a_{i+1}.$
\begin{proposition}
    $L$ has property $(*)$ if $|l(UT(N))-l(UT(P))|=K$, $N$ starts with $a_2^{-1}$ and $P$ ends with $a_1$.
\end{proposition}
\begin{proof}
Suppose $l(UT(N))>l(UT(P)),$ $N=a_2^{-1}N'a^{-1}_{i},$ $P=a_{i+1}P'a_1.$

If $N'=N''a_i^{-1}$ and $P'=a_{i+1}^2P'',$ then $NP=a_2^{-1}N''a_i^{-1}a_{i+2}a_i^{-1}a_{i+1}^2P''a_1$ by (2).

Let $L_-=L,$ $L_+=cl(a_2^{-1}N''a_i^{-1}a_{i+2}a_ia_{i+1}^2P''a_1)$, $L_0=cl(a_2^{-1}N''a_i^{-1}a_{i+2}a_{i+1}^2P''a_1).$

By (4), we have \begin{align*}a_2^{-1}N''a_i^{-1}a_{i+2}a_{i+1}^2P''a_1&=\begin{cases}a_2^{-1}N'''a_{i+2} P''a_1& \quad \text{if } N''= N'''a_i^{-1}\\a_2^{-1}N'''a_{i} P''a_1 &\quad \text{if } N''= N'''a_{i+1}^{-1}\end{cases}.\end{align*} By analyzing word length we see that $\overline{u}(L_0)<u(L_-)-1=u(L_+)-1$. By Lemma 4.5 and the inductive hypothesis, $L_-$ has property $(*).$

If $N'=N''a_{i+1}^{-1}$ and $P'=a_{i+1}^2P''$, let $L_0=L,$ $L_-=cl(Na_i^{-1}P),$ $L_+=cl(Na_i P).$ 

We have $Na_i P=a_2^{-1}N'P=a_2^{-1}N''P'a_1,$ so $u(L_+)<u(L_-)-1=\overline{u}(L_0)-1.$ By Lemma 4.5 and discussion of the case $N'=N''a_i^{-1}$, $L_0$ has property $(*)$.

That is, $L$ has property $(*)$ if $P'=a_{i+1}^2 P''$.

Now, if $P'=a_{i+1}a_{i+2}P'',$ let $L_0=L,$ $L_+=cl(Na_{i+1}Pa_1)$, $L_-=cl(N P^{'}a_1)$. By analyzing word length we see that $u(L_-)=u(L_+)-1$, $\overline{u}(L_0)=u(L_+).$ Set $u=u(L_+).$ By Proposition 3.5, we see that $\widehat{HFK}(L_+,u)\cong \widehat{HFK}(L_-,u)\cong \widehat{HFK}(L_0,u(L_0))\cong\mathbb{F}[p]$ for some $p$. By our discussion of the case $P'=a_{i+1}^2P''$, we see that $L_+$ has property $(*).$ Hence, $L_0$ has property $(*)$ by Lemma 4.6.

That is, $L$ has property $(*)$ if $P'=a_{i+1}a_{i+2}P''$.

If $P'=a_{i+2}P''$, let $L_0=L,$ $L_+=cl(Na_{i+1}Pa_1)$, $L_-=cl(N P^{'}a_1)$. We see that $u(L_-)=u(L_+)-1,$ $u(L_+)=\overline{u}(L_0)$. Set $u=u(L_+)$. By Lemma 3.1, $\widehat{HFK}(L_+,u)\cong\widehat{HFK}(L_0,u(L_0))\cong\mathbb{F}[p]$ for some $p.$ 

Also, let $L'_+=L_0,$ $L'_0=L_-,$ $L'_-=cl(N a_{i+1}^{-1}P^{'}a_1)$. By (5), $N a_{i+1}^{-1}P^{'}a_1=a_2^{-1}N'a_{i+1}^{-1}P''a_1$, we see that $u(L'_-)=u(L'_+)-1=\overline{u}(L'_0)-1.$ By Lemma 3.1, $\widehat{HFK}(L'_0,u(L'_0))\cong\widehat{HFK}(L_-,u)\cong\mathbb{F}[p]$.

By our discussion of the case $P'=a_{i+1}a_{i+2}P''$, we see that $L_+$ has property $(*).$ Hence, $L_0$ has property $(*)$ by Lemma 4.6.\end{proof}

All in all, we have shown by induction that $L$ has property $(*)$ if $N$ starts with $a_2^{-1}$ and $P$ ends with $a_1$. By mirror symmetry, similar property holds when $N$ starts with $a_2^{-1}$ and $P$ ends with $a_3$ [10].

\subsection{Case 3: Suppose $w=NP$, $1=l(N)\leq l(P),$ $L=cl(w)$.}

Without loss of generality suppose $N=a^{-1}_2.$ Because we require $w$ to be the shortest, $P$ cannot start or end with $a_2.$ Since $a_2^{-1}a_3^n=a_1^na_2^{-1},$ up to conjugation we may suppose $P$ starts with $a_1.$ Let $k=\lfloor\frac{l(UT(P))}{3}\rfloor$. 

If $k=0,$ $P$ is 0 or of the form $a_1^{n}$ with $n>0.$ If $P$ is 0 then $L$ splits into two unknots and hence the second-to-top term is trivial.

If $P$ is of the form $a_1^{n}$ with $n>0,$ then, $L=T(2,n)$ is alternating. Hence, $L$ is $\delta$-thin [6]. The top term of $\widehat{HFK}(L)$ is $\mathbb{F}[0]$ by Proposition 3.3, which means the second-to-top term is supported at Maslov grading -1.
\begin{proposition} We have

\begin{align}
\widehat{HFK}(L,u(L)-1)\cong\begin{cases}
    \mathbb{F}^t[0]\text{ for some $t\geq 0$}, &\text{if } k=1,\\
  \mathbb{F}[0]\oplus\mathbb{F}^t[k-1]\text{ for some $t\geq 0$}, &\text{if } k>1.
\end{cases}
\end{align}
\end{proposition}

\begin{proof}
Let $L_0=L,$ $L_+=cl(P),$ $L_-=cl(a_2^{-2}P).$ 

By conjugation, $P$ would start with $a_1$ and end with $a_3$. By analyzing word length we see that $u(L_+)=u(L_-)-1=\overline{u}(L_0)-1$. Set $u=u(L_-).$ By Propositions 3.4 and 3.5, and the discussion of Case 2, $\widehat{HFK}(L_+,u-1)\cong\mathbb{F}[0]\oplus\mathbb{F}[k-1]$, and $\widehat{HFK}(L_-,u-1)$ is supported at Maslov grading $k$.

Consider the exact triangle
\[
\begin{tikzcd}[row sep=large, column sep=large]
& H(L_{0}, u-1) \arrow[dr, "{[+0]}"] & \\
\widehat{HFK}(L_{-}, u-1)\arrow[ur, "{[-1]}"] \arrow[rr, "{[+0]}" above, leftarrow] & & \widehat{HFK}{(L_+, u-1)}
\end{tikzcd}
\]

The exact triangle reveals that $\widehat{HFK}(L_0,u(L_0)-1)\cong \mathbb{F}^t[0]$ for some $t\geq 0$ if $k=1$, and $\widehat{HFK}(L_0,u(L_0)-1)\cong \mathbb{F}[0]\oplus \mathbb{F}^t[k-1]$ for some $t\geq 0$ if $k>1.$

Hence, we have proved the proposition.\end{proof}
\subsection{Case 4: Suppose $w=P$ is in Xu's form, $w$ starts with $a_1$ and ends with $a_3$, $L=cl(w).$}
\begin{proposition} Let $k=\frac{l(UT(P))}{3},$ we have:
\begin{table}[H]
\centering
\caption{Classification of $\widehat{HFK}(L,u(L)-1)$ in Case 4}
\setlength{\tabcolsep}{6pt}
\renewcommand{\arraystretch}{1.30}
\begin{tabular}{|p{0.10\linewidth}|p{0.76\linewidth}|}
\hline \textbf{Value of $k$}  &$\widehat{HFK}(L,u(L)-1)$ \\
\hline
 $k\not=2$ 
& $\mathbb{F}^{rank(\widehat{HFK}(L_1,u(L_1)-1))+|L|-|L_1|-1}[-1]\oplus\mathbb{F}^{rank(\widehat{HFK}(L_2,u(L_2)-1))+|L|-|L_2|-1}[k-2],$ \newline$(L_1=cl(a_2w),$ $L_2=cl(a_2^{-1}w)).$ \\ \hline
 $k=2$& If $w\sim a_1^{n_1}w',$ $a_1w'$ nondecreasing positive, $n_1>3:$
 \newline
 $\mathbb{F}^{rank(\widehat{HFK}(L_1,u(L_1)-1))+|L|-|L_1|}[-1]\oplus\mathbb{F}^{rank(\widehat{HFK}_1(L_2,u(L_2)-1))+|L|-|L_2|-1}[0]$,
 \newline$(L_1=cl(a_1^{n_1-1}a_3a_1w'),$ $L_2=cl(a_1^{n_1-1}a_3^{-1}a_1w')).$
 \newline If $w$ contains no subword of the form $a_i^3:$
 \newline$\mathbb{F}^{|L|}[-1]\oplus\mathbb{F}^t[0]$ for some $t\geq 0.$
 \\ \hline
\end{tabular}
\label{tab:3braids-second-topP}
\end{table}
\end{proposition}
\begin{proof}
Let $L_+=cl(a_2P)$, $L_0=L$, $L_-=cl(a_2^{-1}P).$ We have $u(L_+)=u(L_-)=\overline{u}(L_0)$.

Take $u=u(L_+)$. By (7c) and (8), $\widehat{HFK}(L_+,u-1)\cong\mathbb{F}[k-2]\oplus \mathbb{F}^{t}[-1]$ for some $t\geq 0,$ and $\widehat{HFK}(L_-,u-1)$ is supported at Maslov gradings $0$ and $k-1$. Consider the exact triangle

\[
\begin{tikzcd}[row sep=large, column sep=large]
& H(L_{0}, u-1) \arrow[dr, "{[+0]}"] & \\
\widehat{HFK}(L_{-}, u-1) \arrow[ur, "{[-1]}"] \arrow[rr, "{[+0]}" above, leftarrow] & & \widehat{HFK}{(L_+, u-1)}
\end{tikzcd}
\]

From the exact triangle we see that $H(L_0,u-1)$ is supported at Maslov grading $k-2$ and $-1.$ In fact, when $k\not=2$, the exact triangle shows $$H_{m}(L_0,u-1)\cong\widehat{HFK}_{m}(L_+,u-1)\oplus\widehat{HFK}_{m+1}(L_-,u-1).$$

If $k=2$, suppose $w=a_1^{n_1}w'.$ 

If $n_1>2,$ let $L'_0=L,$ $L'_+=cl(a_1^{n_1-1}a_3a_1w')$, $L'_-=cl(a_1^{n_1-1}a_3^{-1}a_1w')$.

By (1), we have $a_1^{n_1-1}a_3a_1w'=\alpha^2a_3^{n_1-3}w',$ and by (2) we have $a_1^{n_1-1}a_3^{-1}a_1w'=a_1^{n_1-2}a_2^{-1}a_1^2w'\sim a_2^{-1}a_1^2 w'a_1^{n_1-2}.$

By analyzing word length, we see that $u(L'_+)=u(L'_-)=\overline{u}(L'_0)$. Set $u'=u(L'_+)$. By (6), we know that $\widehat{HFK}(L'_+,u'-1)$ is supported at Maslov grading $-1$ and by (8), $\widehat{HFK}(L'_-,u'-1)\cong\mathbb{F}[0]\oplus \mathbb{F}^t[1]$ for some $t\geq 0$. 

The exact triangle
\[
\begin{tikzcd}[row sep=large, column sep=large]
& H(L'_{0}, u'-1) \arrow[dr, "{[+0]}"] & \\
\widehat{HFK}(L'_{-}, u'-1)\arrow[ur, "{[-1]}"] \arrow[rr, "{[+0]}" above, leftarrow] & & \widehat{HFK}{(L'_+, u'-1)}
\end{tikzcd}
\]
shows that $H_m(L'_0,u'-1)\cong\widehat{HFK}_{m+1}(L'_-,u'-1)\oplus \widehat{HFK}_{m}(L'_+,u'-1)$.

Let $L(n_1,m_1,l_1,n_2,m_2,l_2)=cl(a_1^{n_1}a_2^{m_1}a_3^{l_1}a_1^{n_2}a_2^{m_2}a_3^{l_2})$. Let $S=\{L(n_1,m_1,l_1,n_2,m_2,l_2):\text{ each $n_i,m_i,l_i\leq 2$}\}.$

If $n_1\leq 2$ but some other $n_i,m_i,l_i>2,$ by conjugation it becomes the case when $n_1>2$. Thus, we may now assume each $n_i,m_i,l_i\leq 2.$

Assume $w=a_1^2a_2^{m_1}a_3^{l_1}a_1^{n_2}a_2^{m_2}a_3^{l_2}=a_1^2w'=a_1^2a_2w''a_3$. Let $L'_+=L,$ $L'_0=cl(a_1w')$, $L'_-=cl(w').$

We have $u(L'_-)=u(L'_+)-1=\overline{u}(L'_0)-1.$ Since $w'\sim \alpha w'',$ $L'_-$ is strongly quasi-positive and fibered, we have $\widehat{HFK}_m(L'_+,u(L'_+)-1)\cong H_m(L'_0,\overline{u}(L'_0)-1)$ for $m\not=-1$ by Lemma 4.2

From the skein triple $(L_+,L_-,L_0)$  we already known that the second-to-top terms of $L'_0,L'_+$ are supported at Maslov grading $0,-1$. At $m=-1,$ we have the exact sequence $$0\to \mathbb{F}\to H_{-1}(L'_0,\overline{u}(L'_0)-1)\to \widehat{HFK}_{-1}(L'_+,u(L'_+)-1)\to 0.$$ Then, $$rank(\widehat{HFK}_{-1}(L'_+,u(L'_+)-1))=rank(H_{-1}(L'_0,\overline{u}(L'_0)-1))-1=rank(\widehat{HFK}_{-1}(L'_0,u(L'_0)-1))-1$$ if $L_0$ has more components, and $$rank(\widehat{HFK}_{-1}(L'_+,u(L'_+)-1))=rank(H_{-1}(L'_0,\overline{u}(L'_0)-1))-1=rank(\widehat{HFK}_{-1}(L'_0,u(L'_0)-1))+1$$ if $L_0$ has fewer components.

Also, by conjugation, if any $n_i,m_i,l_i$ is 2, we may assume $n_1=2$. Hence, $rank(\widehat{HFK}_{-1}(L^*,u(L^*)-1))-|L^*|$ is constant for any $L^*\in S$. \textit{KnotInfo} shows that $L(2,1,1,1,1,2)=12n_{830}$ has second-to-top term $\mathbb
F[-1]\oplus\mathbb{F}^3[0]$. This means if $L\in S$, then $\widehat{HFK}(L,u(L)-1)\cong \mathbb
{F}^{|L|}[-1]\oplus \mathbb
F^t[0]$ for some $t\geq 0.$
\end{proof}
\subsection{Case 5: $w=a_1^{n}$, $n\geq 0$. $L=cl(w).$}
    
In this case, $cl(w)$ splits into a copy of $T(n,2)$ and a copy of the unknot $U$. For $n>1,$ $T(n,2)$ has top term $\mathbb{F}[0]$ and its second-to-top term is $\mathbb{F}^2[-1]$ for even $n$ and $\mathbb{F}[-1]$ or odd $n.$ Also, $\widehat{HFK}(U)\cong \mathbb{F}[0,0]$. Using the formula of $\widehat{HFK}$ for disjoint union [10], we have $\widehat{HFK}(L)\cong \widehat{HFK}(T(n,2))\otimes \widehat{HFK}(U)\otimes V$, where $V\cong [0,0]\oplus [-1,0]$. Thus, 
$$\widehat{HFK}(L,u(L)-1)\cong\begin{cases}
    0&\quad \text{if $n=0$ or 1,}\\
    \mathbb{F}[-1]\oplus\mathbb{F}[-2]&\quad \text{if $n>1$ and $n$ is odd,}
    \\\mathbb{F}^2[-1]\oplus\mathbb{F}^2[-2]&\quad \text{if $n>1$ and $n$ is even.}
\end{cases}$$
\section{Proof of Main Theorem and the Rank Conjecture}
Combining the formula $\sum_{m,s}(-1)^m \dim\widehat{HFK}_m(L,s)t^s\doteq (t^{\frac{-1}{2}}-t^{\frac{1}{2}})^{|L|-1}\Delta_{cl(w)}(t)$ and the propositions in Section 4, we can now fully determined $\widehat{HFK}(L,u(L)-1)$ for a closed 3-braid $L.$ This finishes the proof of Theorem 1.1.$\qed$

Corollary 1.3 also follows from the results in Section 4.
\begin{proof}[Proof of Corollary 1.3]
    \begin{lemma}
        If $w=\alpha P$ is a shortest word in Xu's form, $k=\lfloor\frac{l(UT(P))}{3}\rfloor>0,$ then $rank(\widehat{HFK}_{-1}(L,u(L)-1))\geq |L|.$
    \end{lemma}
    \begin{proof}
    If $P=P_1a_j^2P_2$ for some $j\in \{1,2,3\},$ with $P_1a_j,a_jP_2$ nondecreasing positive words, consider the skein triple $L_+=L, L_0=cl(\alpha P_1a_jP_2), L_-=cl(\alpha P_1P_2).$ 

As in Section 4.2, we have $u(L_-)=u(L_+)-1=\overline{u}(L_0)-1$, and $L_-$ is strongly quasi-positive and fibered. Thus, we have the exact sequence $0\to\mathbb{F}\to H_{-1}(L_0,\overline{u}(L_0)-1)\to \widehat{HFK}_{-1}(L_+,u(L_+)-1)\to 0$ by Lemma 4.2. Then, $rank(\widehat{HFK}_{-1}(L_+,u(L_+)-1))-|L_+|=rank(H_{-1}(L_0,\overline{u}(L_0)-1))-1-|L_+|=rank(\widehat{HFK}_{-1}(L_0,u(L_0)-1))-|L_0|.$ 

Hence, we reduce to the case $P=UT(P).$

If $l(UT(P))\equiv 1$ mod 3, we know that $\widehat{HFK}(L,u(L)-1)$ is supported at Maslov grading -1, and by [2], we have $rank(\widehat{HFK}(L,u(L)-1))\geq |L|.$ Hence, $rank(\widehat{HFK}_{-1}(L,u(L)-1))=rank(\widehat{HFK}(L,u(L)-1))\geq |L|.$

If $l(UT(P))\equiv 2$ mod 3, let $P=P'a_1a_2$. Let $L_+=L,$ $L_0=cl(\alpha P'a_1),$ $L_-=cl(\alpha P'a_1a_2^{-1})=cl(a_1^2P').$ 

We have $u(L_-)=u(L_+)-1=\overline{u}(L_0)-1.$ By Propositions 3.4 and 4.3, $\widehat{HFK}(L_-,u(L_-))$ is $\mathbb{F}[0]\oplus\mathbb{F}[k-1],$ $H(L_0,u(L_0)-1)\cong \mathbb{F}^t[-1]$, and $\widehat{HFK}(L_+,u(L_+)-1)\cong \mathbb{F}[k-1]\oplus \mathbb{F}^{t'}[-1]$, where $t,t'\geq 0.$

Now, take $u=u(L_+)$ and consider the exact sequences $$\cdots \to H_{m}(L_0,u-1)\to\widehat{HFK}_{m}(L_+,u-1)\xrightarrow{F_m}\widehat{HFK}_{m}(L_-,u-1)\xrightarrow{G_m}H_{m-1}(L_0,u-1)\to\cdots$$

If $k=1,$ let $L'_+=cl(\alpha a_1^2a_2a_3a_1a_2),$ $L'_0=L_+,$ $L'_-=cl(\alpha a_2a_3a_1a_2)$. From \textit{KnotInfo} [5], we know that $\widehat{HFK}(L'_+,u(L'_+)-1)\cong\mathbb{F}[0]\oplus\mathbb{F}[-1]$ and the skein exact triangle shows that $\widehat{HFK}_{-1}(L_+,u-1)\cong\mathbb{F}^2[-1]$.

If $k>1,$ we have the exact sequence $$0\to \mathbb{F}\to H_{-1}(L_0,u-1)\to \widehat{HFK}_{-1}(L_+,u-1)\to 0$$ because $G_0$ is injective as in Section 4.2. Also, $rank(\widehat{HFK}_{-1}(L_0,u(L_0)-1))\geq |L_0|.$ Thus, $$t'=t-1=rank(\widehat{HFK}_{-1}(L_0,u(L_0)-1))+|L_+|-|L_0|\geq |L_+|.$$ 

If $l(UT(P))\equiv 0$ mod 3, let $P=a_1a_2a_3 P'=a_1a_2a_3 P''a_3.$ Let $L_+=L,$ $L_0=cl(a_2 a_1a_2a_3P')$, $L_-=cl(a_2a_2a_3P').$

We have $u(L_-)=u(L_+)-1=\overline{u}(L_0)-1.$ As in Section 4.2, $L_-$ is fibered and strongly quasi-positive. By Lemma 4.2 we have an exact sequence $$0\to \mathbb{F}\to H_{-1}(L_0,\overline{u}(L_0)-1)\to \widehat{HFK}_{-1}(L_+,u(L_+)-1)\to 0.$$ Since $rank(\widehat{HFK}_{-1}(L_0,u(L_0)-1))\geq |L_0|,$ we have $$rank(\widehat{HFK}_{-1}(L_+,u(L_+)-1))=rank(H_{-1}(L_0,\overline{u}(L_0)-1))-1=rank(\widehat{HFK}_{-1}(L_0,u(L_0)-1))+|L_+|-|L_0|\geq |L_+|.$$\end{proof}
    \begin{lemma}
        If $w=a_2^{-1}P$ is a shortest word in Xu's form, $P$ starts with $a_1,$ $k=\lfloor\frac{l(UT(P))}{3}\rfloor>0,$ and $L=cl(w),$ then we have $rank(\widehat{HFK}_{k-1}(L,u(L)-1))\geq |L|.$ If $k=1,$ $rank(\widehat{HFK}_{0}(L,u(L)-1))\geq |L|+1.$
    \end{lemma}
    \begin{proof}
        Consider the skein triple $L_0=L,$ $L_+=cl(P),$ $L_-=cl(a_2^{-2}P).$ We have $u(L_-)=\overline{u}(L_0)=u(L_+)+1.$ Since $P$ must end with $a_1$ or $a_3,$ $P\sim P'$ where $P'$ is nondecreasing positive, $l(UT(P'))\equiv 0$ mod 3.
        
         If $k\geq 2$, by Propositions 3.4, 3.5, 4.9, 4.10, $\widehat{HFK}(L_+,u(L_+))\cong \mathbb{F}[k-1]\oplus \mathbb{F}[0]$, $H(L_0,\overline{u}(L_0)-1)\cong \mathbb{F}^t[k-1]\oplus \mathbb{F}[0]$, and $\widehat{HFK}(L_-,u(L_-)-1)\cong \mathbb{F}^{t'}[k]$, where $t,t'\geq 0.$

        We have the exact sequence $$0\to \widehat{HFK}_k(L_-,u(L_-)-1)\to H_{k-1}(L_0,\overline{u}(L_0)-1)\to \mathbb{F}\to 0.$$ We see that $$t'=t-1=rank(\widehat{HFK}_{k-1}(L_0,u(L_0)-1))-|L_0|+|L_-|.$$ Moreover, by [2], $t'\geq |L_-|,$ so $rank(\widehat{HFK}_{k-1}(L_0,u(L_0)-1))\geq |L_0|$ and hence $rank(\widehat{HFK}(L_0,u(L_0)-1))\geq |L_0|+1.$

        If $k=1,$ we see that $\widehat{HFK}(L_+,u(L_+))\cong \mathbb{F}^2[0]$, $H(L_0,\overline{u}(L_0)-1)\cong \mathbb{F}^t[0]$, and $\widehat{HFK}(L_-,u(L_-)-1)\cong \mathbb{F}^{t'}[1]$, where $t,t'\geq 0.$

        We have the exact sequence $$0\to \widehat{HFK}_1(L_-,u(L_-)-1)\to H_{0}(L_0,\overline{u}(L_0)-1)\to \mathbb{F}^2\to 0.$$ We see that $$t'=t-2=rank(\widehat{HFK}_0(L_0,u(L_0)-1))-1-|L_0|+|L_-|.$$ Moreover, by [2], $t'\geq |L_-|,$ so $rank(\widehat{HFK}_0(L_0,u(L_0)-1))\geq |L_0|+1$ and hence $rank(\widehat{HFK}(L_0,u(L_0)-1))\geq |L_0|+1.$
    \end{proof}
By [2], if $L$ is fibered then its second-to-top term has rank $\geq |L|.$ Thus, when $L$ has property $(*)$, the corollary is immediate. If $L$ is fibered and does not have property $(*),$ then either $L=cl(\alpha P),$ with $l(UT(P))\not \equiv 1$ mod 3 or $L=cl(a_2^{-1}P),$ with $k=\lfloor\frac{l(UT(P))}{3}\rfloor>1$; by Lemmas 5.1 and 5.2, the corollary holds in the two cases.

Thus, it suffices to consider the case when $L=cl(w),$ $w=P$ and $w$ starts with $a_1$ and ends with $a_3$ or $w=a_1^n.$ If $w_1=a_1^n$ the corollary is immediate. If $w=P$, and $w$ starts with $a_1$ and ends with $a_3$, let $k=\frac{l(UT(P))}{3}.$

If $k>2,$ let $L_+=cl(a_2P)$ and $L_-=cl(a_2^{-1}P).$ By Lemmas 5.1 and 5.2 and Proposition 4.11, we have $$rank(\widehat{HFK}_{k-2}(L,u(L)-1))=rank(\widehat{HFK}(L_-,u(L_-)-1))+|L|-|L_-|-1\geq |L|$$
 and $$rank(\widehat{HFK}_{-1}(L,u(L)-1))=rank(\widehat{HFK}(L_+,u(L_+)-1))+|L|-|L_+|-1\geq |L|.$$ Also, $\widehat{HFK}_m(L,u(L)-1)\cong \widehat{HFK}_{m+1}(L,u(L))\cong 0$ for $m\not=-1,k-2$, and $\widehat{HFK}(L,u(L))\cong \mathbb{F}[0]\oplus \mathbb{F}[k-1]$. Hence, the corollary holds.

 If $k=1,$ then by Lemmas 5.1 and 5.2 and Proposition 4.11,
 \begin{align*}
     rank(\widehat{HFK}_{-1}(L,u(L)-1))&=rank(\widehat{HFK}(L_-,u(L_-)-1))+rank(\widehat{HFK}(L_+,u(L_+)-1))+2|L|-2|L_-|-2
     \\&\geq 2|L|-1.
 \end{align*}
 If $rank(\widehat{HFK}_{-1}(L,u(L)-1))\geq 2,$ then the corollary holds. We could have $rank(\widehat{HFK}_{-1}(L,u(L)-1))<2$ only if $|L|=1.$ By Proposition 3.4, $\widehat{HFK}(L,u(L))$ is supported in a single Maslov grading. Thus, when $|L|=1,$ the corollary follows from [9]. 

If $k=2,$ suppose $L=L(n_1,m_1,l_1,n_2,m_2,l_2)$ and $w=a_1^{n_1}w'.$ 

If $n_1>2,$ then, as in Section 4.5, consider the skein triple $L'_0=L,$ $L'_+=cl(\alpha^2a_3^{n_1-3}w'),$ $L'_-=cl(a_2^{-1}a_1^2 w'a_1^{n_1-2}).$

Now, let $P'= w'a_1^{n_1-2}=a_2w''a_1^{n_1-2}$. consider $L''_+=L'_-,$ $L''_0=cl(a_2^{-1}a_1 P'),$ $L''_-=cl(w''a_1^{n_1-2})$. We see that $u(L''_-)<u(L''_+)-1=\overline{u}(L''_0)-1.$ Thus, by Lemma 4.1, \begin{align*}rank(\widehat{HFK}_1(L''_+,u(L''_+)-1))&=rank(H_1(L''_0,\overline{u}(L''_0)-1))\\&=rank(\widehat{HFK}_1(L''_0,u(L''_0)-1))+1+|L''_+|-|L''_0|\\&\geq |L''_+|+1\end{align*} because Lemma 5.2 has shown that $rank(\widehat{HFK}_1(L''_0,u(L''_0)-1))\geq |L''_0|.$

That is, by Proposition 4.11,
$$rank(\widehat{HFK}_{0}(L,u(L)-1))=rank(\widehat{HFK}_1(L_-,u(L_-)-1))+|L|-|L_-|-1\geq |L|$$
 and $$rank(\widehat{HFK}_{-1}(L,u(L)-1))=rank(\widehat{HFK}(L_+,u(L_+)-1))+|L|-|L_+|\geq |L|.$$
Also, $\widehat{HFK}_m(L,u(L)-1)\cong \widehat{HFK}_{m+1}(L,u(L))\cong 0$ for $m\not=-1,0$, and $\widehat{HFK}(L,u(L))\cong \mathbb{F}[0]\oplus \mathbb{F}[1]$. Hence, the corollary holds.

If each $n_i,m_i,l_i\leq 2$, then the discussion in Section 4.5 and the fact that $L(2,1,1,1,1,2)$ has second-to-top term $\mathbb{F}[-1]\oplus \mathbb{F}^3[0]$ reveals that $rank(\widehat{HFK}_0(L,u(L)-1))\geq rank(\widehat{HFK}_{-1}(L,u(L)-1))=|L|.$
\end{proof}

\noindent\section{Acknowledgment}

I would like to thank my advisor Professor Yi Ni for his guidance, and I would like to thank Professor Zhechi Cheng for addressing some of my questions about his work.

\end{document}